\DeclareFontFamily{OT1}{rsfs}{}
\DeclareFontShape{OT1}{rsfs}{n}{it}{<-> rsfs10}{}
\DeclareMathAlphabet{\mathscr}{OT1}{rsfs}{n}{it}
\newtheorem{theorem}{Theorem}[section]
\theoremstyle{remark} \newtheorem{remark}[theorem]{Remark}
\newtheorem{example}[theorem]{Example}}
\newcommand{\Abb}{{\mathbb{A}}}
\newcommand{\Pbb}{{\mathbb{P}}}
\newcommand{\cA}{{\mathscr A}}
\newcommand{\cE}{{\mathscr E}}
\newcommand{\cF}{{\mathscr F}}
\newcommand{\cI}{{\mathscr I}}
\newcommand{\cK}{{\mathscr K}}
\newcommand{\cL}{{\mathscr L}}
\newcommand{\cO}{{\mathscr O}}
\newcommand{\cP}{{\mathscr P}}
\newcommand{\uF}{{\underline F}}
\newcommand{\uU}{{\underline U}}
\newcommand{\uX}{{\underline X}}
\newcommand{\uY}{{\underline Y}}
\newcommand{\Til}[1]{{\widetilde{#1}}}
\newcommand{\qede}{\hfill$\lrcorner$}
\DeclareMathOperator{\rk}{rk}
\DeclareMathOperator{\codim}{codim}
\DeclareMathOperator{\Sing}{Sing}
\title{
How many hypersurfaces does it take to cut out a Segre class?
}
\author{Paolo Aluffi}
\address{
Mathematics Department, 
Florida State University,
Tallahassee FL 32306, U.S.A.
}
\email{aluffi@math.fsu.edu}
\begin{document}

\begin{abstract}
We prove an identity of Segre classes for zero-schemes of compatible
sections of two vector bundles. Applications include bounds on the
number of equations needed to cut out a scheme with the same Segre
class as a given subscheme of (for example) a projective variety, and a
`Segre-Bertini' theorem controlling the behavior of Segre classes of
singularity subschemes of hypersurfaces under general hyperplane
sections.

These results interpolate between an observation of Samuel concerning
multiplicities along components of a subscheme and facts concerning
the integral closure of corresponding ideals. The Segre-Bertini theorem
has applications to characteristic classes of singular varieties.
The main results are motivated by the problem of computing Segre classes
explicitly and applications of Segre classes to enumerative geometry.
\end{abstract}

\maketitle


\section{Introduction}\label{sec:intro}

A result from P.~Samuel's thesis states that, under mild hypotheses,
in computing the multiplicity of a variety $Y$ along a subscheme $Z$
at an irreducible component $V$ of $Z$ we may replace the ideal 
determined by $Z$ in the local ring $\cO_{V,Y}$ by an ideal
generated by $\codim_VY$ elements (cf.~\cite[Theorem~22]{MR0120249}).  
In Fulton-MacPherson intersection
theory, the same multiplicity may be defined by means of {\em Segre
  classes\/} (\cite[\S4.3]{85k:14004}); it is then natural to ask
whether the number of equations needed to define a Segre class may be
similarly bounded. This is one of the questions we answer in this
note. We work over an algebraically closed field, and our schemes are
embeddable in nonsingular varieties.
We denote by $Z_\text{red}$ the reduced scheme supported on $Z$, and by
$s(Z,Y)$ the Segre class of $Z$ in $Y$.

\begin{theorem}\label{thm:main}
Let $Y$ be a pure-dimensional scheme, and let $Z\subseteq Y$ be a 
closed subscheme. Let $X_i$, $i=1,\dots $ be general elements of a linear
system cutting out $Z$.
\begin{itemize}
\item[$(a)$] Let $Z':= X_1\cap \cdots\cap X_{\dim Y+1}$.
Then ${Z'}_\text{red} = Z_\text{red}$, and $s(Z',Y)=s(Z,Y)$.
\item[$(b)$] Let $Z'':= X_1\cap \cdots\cap X_{\dim Y}$.
Then there exists an open neighborhood $Y^\circ$ of $Z$ in $Y$ such that 
$(Z''\cap Y^\circ)_\text{red} = Z_\text{red}$, and $s(Z''\cap Y^\circ,Y)=s(Z,Y)$.
\end{itemize}
\end{theorem}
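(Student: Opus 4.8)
The plan is to present $Z$ as the zero scheme of a section of a vector bundle, then compose with a general bundle surjection, so that the statement reduces to a comparison of Segre classes on a single blow-up. Fix a line bundle $\cL$ on $Y$ and a finite-dimensional subspace $V\subseteq\Gamma(Y,\cL)$ with $\mathrm{im}(V\otimes\cO_Y\to\cL)=\cI_Z\otimes\cL$ realizing the given linear system; a basis $s_1,\dots,s_N$ of $V$ assembles into a section $s=(s_1,\dots,s_N)$ of $\cE:=\cL^{\oplus N}$ with zero scheme $Z$, and a general member of the system is $\sigma=\sum_j a_j s_j$ for general scalars $a_j$. Collecting $k$ such members into a general scalar matrix $A$ gives a section $t=(\sigma_1,\dots,\sigma_k)$ of $\cF:=\cL^{\oplus k}$ with $t=\alpha\circ s$, where $\alpha\colon\cE\to\cF$ is the bundle map determined by $A$ (surjective for general $A$ when $k\le N$); this is the situation of compatible sections of the two vector bundles $\cE$ and $\cF$. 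The assertions about reduced structures are a direct application of Bertini's theorem: on $Y\setminus Z$ the sections $s_j$ have no common zero, so $k=\dim Y+1$ general members of the system have empty common zero locus there, whence $(X_1\cap\dots\cap X_{\dim Y+1})_{\text{red}}=Z_{\text{red}}$ (the inclusion $Z\subseteq X_i$ is automatic, as $\sigma_i$ vanishes on $Z$); while $k=\dim Y$ general members leave a finite set $\{p_1,\dots,p_m\}\subseteq Y\setminus Z$, and one then takes $Y^\circ:=Y\setminus\{p_1,\dots,p_m\}$.

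The core is an identity of Segre classes that I would isolate as a lemma on compatible sections: for $\alpha\colon\cE\to\cF$, $s\in\Gamma(\cE)$ and $t=\alpha\circ s$, if the ideal sheaf $\cI_{Z(t)}$ generates $\cI_{Z(s)}\cdot\cO_{\Til Y}$ on the blow-up $\pi\colon\Til Y\to Y$ of $Z(s)$, then $s(Z(s),Y)=s(Z(t),Y)$; the proof is purely ideal-theoretic, using only $\cI_{Z(t)}\subseteq\cI_{Z(s)}$ and the hypothesis (which, note, already forces $Z(s)$ and $Z(t)$ to have the same support). Writing $E$ for the exceptional Cartier divisor, the hypothesis reads $\cI_{Z(t)}\cdot\cO_{\Til Y}=\cO_{\Til Y}(-E)=\cI_{Z(s)}\cdot\cO_{\Til Y}$, an invertible sheaf, so by the universal property of blow-ups $\pi$ factors as $\Til Y\xrightarrow{g}\mathrm{Bl}_{Z(t)}Y\xrightarrow{\pi'}Y$, where, with $E'$ the exceptional divisor of $\mathrm{Bl}_{Z(t)}Y$, one has $g^*\cO(-E')=\cO(-E)$ and hence $g^*E'=E$. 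Moreover $g$ is an isomorphism over $Y\setminus Z(s)=Y\setminus Z(t)$, which is dense in each component of $Y$ not contained in $Z(s)$ and hence in both blow-ups, so $g$ is proper birational with $g_*[\Til Y]=[\mathrm{Bl}_{Z(t)}Y]$. The projection formula then gives
\[
s(Z(t),Y)=\sum_{j\ge1}(-1)^{j-1}\pi'_*\!\big({E'}^{j}\cap g_*[\Til Y]\big)=\sum_{j\ge1}(-1)^{j-1}\pi_*\!\big(E^{j}\cap[\Til Y]\big)=s(Z(s),Y).
\]

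It remains to verify the hypothesis of the lemma for the section $t$. The pulled-back sections $\pi^*s_1,\dots,\pi^*s_N$ generate the line bundle $\mathscr M:=\cO_{\Til Y}(-E)\otimes\pi^*\cL$ (since $s_1,\dots,s_N$ generate $\cI_Z\otimes\cL$ on $Y$), so this sub-linear system is base-point free on $\Til Y$, a scheme of dimension $\le\dim Y$. Hence, for general $A$, the $\dim Y+1$ sections $\pi^*\sigma_1,\dots,\pi^*\sigma_{\dim Y+1}$ of $\mathscr M$ have empty common zero locus on $\Til Y$ by Bertini (applied on reduced irreducible components, and to fundamental cycles if $Y$ is non-reduced), which is precisely the statement $\cI_{Z'}\cdot\cO_{\Til Y}=\cO_{\Til Y}(-E)$ for $Z'=X_1\cap\dots\cap X_{\dim Y+1}$; the lemma then yields part $(a)$. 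For part $(b)$, the same reasoning runs over $Y^\circ$: the $\dim Y$ sections $\pi^*\sigma_1,\dots,\pi^*\sigma_{\dim Y}$ may have nonempty, but necessarily finite, common zero locus on $\Til Y$; over $Y\setminus Z$ (where $\pi$ is an isomorphism) that locus is $\{p_1,\dots,p_m\}$, discarded in $Y^\circ$, while over $Z$ it lies on $E$, a scheme of dimension $\le\dim Y-1$ on which the restricted $\dim Y$ general sections have empty common zero locus (the same generality of $A$ serving this secondary estimate). Thus $\cI_{Z''}\cdot\cO_{\pi^{-1}(Y^\circ)}=\cO(-E)|_{\pi^{-1}(Y^\circ)}$ with $Z''=X_1\cap\dots\cap X_{\dim Y}$, and the lemma over $Y^\circ$ gives $s(Z''\cap Y^\circ,Y^\circ)=s(Z\cap Y^\circ,Y^\circ)=s(Z,Y)$, using $Z\subseteq Y^\circ$ and the compatibility of Segre classes with passage to an open neighborhood of the subscheme.

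The genuinely delicate point---and the reason part $(b)$ must pass to the neighborhood $Y^\circ$ instead of working on all of $Y$---is that $\dim Y$ sections fall exactly one short of trivializing $\mathscr M$ on the $\dim Y$-dimensional blow-up $\Til Y$, yet are exactly enough to trivialize it on the $(\dim Y-1)$-dimensional exceptional divisor $E$; the residual common-zero locus therefore avoids $E$, hence lies over $Y\setminus Z$, and can be excised. What remains is routine: making the Bertini dimension estimates on the (typically singular, possibly reducible or non-reduced) blow-up rigorous, which I would do by reducing to reduced irreducible components and fundamental cycles; and checking that components of $Y$ contained in $Z$---which the blow-up simply discards---cause no difficulty, since $Z$, $Z'$ and $Z''\cap Y^\circ$ have the same support and hence the same such components.
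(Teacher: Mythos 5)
Your proof is correct and follows essentially the same route as the paper: both pass to the blow-up $\pi:\widetilde Y\to Y$ of $Y$ along $Z$, factor the pulled-back equations as $\pi^*\sigma_i=e\,\hat\sigma_i$ with $\hat\sigma_i$ sections of $\cO(-E)\otimes\pi^*\cL$ generating a base-point-free system, and use the resulting dimension count on $\widetilde Y$ (dimension $\dim Y$) and on $E$ (dimension $\dim Y-1$) to reduce to the birational invariance of Segre classes. The only cosmetic difference is that you obtain that invariance from the universal property of blow-ups plus the projection formula and avoid the residual-intersection formula, which works here because for parts $(a)$ and $(b)$ the residual locus is empty after excision, whereas the paper's Theorem~\ref{thm:tool} is set up more generally so as to also yield part $(b')$.
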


\noindent (The equality of supports allows us to identify the relevant Chow groups, 
as required in order to compare the Segre classes, cf.~Remark~\ref{rem:supp}.)

Thus, the Segre class of $Z$ in $Y$ can be `cut out' by $\dim Y+1$
hypersurfaces, and by $\dim Y$ hypersurfaces in a neighborhood of $Z$. 
This fact is reminiscent of a well-known result of D.~Eisenbud and G.~Evans
(\cite{MR0327783}), stating that every subscheme $Z$ of $\Pbb^n$ may be 
cut out set-theoretically by $n$ hypersurfaces, and of an observation by 
W.~Fulton (\cite[Example 9.1.3]{85k:14004}) pointing out that $n+1$
hypersurfaces suffice to cut out $Z$ {\em scheme-theoretically\/} if $Z$ is 
locally a complete intersection. As a particular case of Theorem~\ref{thm:main}, 
$n+1$ hypersurfaces suffice to cut out a subscheme $Z'\subseteq \Pbb^n$ with 
the same Segre class in $\Pbb^n$ as $Z$, without any requirement on~$Z$.  
These hypersurfaces may be chosen to be general in a linear 
system cutting out $Z$, and $n$ hypersurfaces suffice in a neighborhood 
of $Z$.\smallskip

Theorem~\ref{thm:main} may be further refined, as follows. Denote by $s(Z,Y)_k$ the
$k$-dimensional component of the Segre class $s(Z,Y)$.

\addtocounter{theorem}{-1}
\begin{theorem}
{\rm (continued)}
\begin{itemize}
\item[$(b')$] More generally, let $c\ge 0$ and let $Z_{(c)}:=X_1\cap \cdots \cap X_{\dim Y-c}$. 
Then there exists a closed subscheme $S$ of dimension $\le c$ in $Y$
such that $\dim (S\cap Z)<c$, 
$(Z_{(c)}\smallsetminus S)_\text{red} = (Z\smallsetminus S)_\text{red}$,
$s(Z_{(c)}\smallsetminus S,Y\smallsetminus S)_c
=s(Z\smallsetminus S, Y\smallsetminus S)_c$, and
$s(Z_{(c)},Y)_k=s(Z,Y)_k$ for $k> c$. 
\end{itemize}
\end{theorem}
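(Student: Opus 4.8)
The plan is to transfer the problem to the blow-up $\pi\colon\Til Y=\mathrm{Bl}_Z Y\to Y$, where the ideal of $Z$ becomes invertible and the mobile part of the linear system becomes base-point-free. Write $d=\dim Y$ and $k=\dim Y-c$; let $E$ be the exceptional divisor, let $\cL$ be the line bundle of the system, and let $f_0,\dots,f_N$ span it, so that $\cI_Z=(f_0,\dots,f_N)$ and hence $\cI_Z\cdot\cO_{\Til Y}=\cO(-E)$. Locally $\pi^*f_j=e\,\Til{f}_j$ with $e$ a local equation for $E$; the $\Til{f}_j$ have no common zero, so they globally generate $\mathscr M:=\pi^*\cL(-E)$ and define a morphism $\phi\colon\Til Y\to\Pbb^N$ with $\phi^*\cO(1)=\mathscr M$. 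If $X_i=\{g_i=0\}$ with $g_i$ general in the system, then $\pi^*g_i=e\,\Til{g}_i$ with $\Til{g}_i=\phi^*H_i$ for a general hyperplane $H_i$, and a one-line computation gives
\[
\cI_{Z_{(c)}}\cdot\cO_{\Til Y}=\cO(-E)\cdot\cI_W,\qquad W:=\phi^{-1}(H_1\cap\cdots\cap H_k),
\]
where $W$ is the zero-scheme of a general section of $\mathscr M^{\oplus k}$.

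Next comes the dimension count. Since $\mathscr M$ and $\mathscr M|_E$ are globally generated, a single general choice of $H_1,\dots,H_k$ can be made for which simultaneously $\dim W\le d-k=c$, $\dim(W\cap E)\le (d-1)-k=c-1$, and $Z_{(c)}$ agrees with $W$ over $Y\smallsetminus Z$. Put $S:=\pi(W)$, a closed subset of dimension $\le c$. Since $\pi^{-1}(Z)$ and $E$ have the same support, $S\cap Z=\pi(W\cap E)$, whence $\dim(S\cap Z)\le c-1<c$. Moreover $Z_{(c)}\smallsetminus Z$ maps isomorphically onto $W\smallsetminus E\subseteq\pi^{-1}(S)$, and $Z\subseteq Z_{(c)}$, so $(Z_{(c)})_\text{red}\subseteq Z_\text{red}\cup S$ and therefore $(Z_{(c)}\smallsetminus S)_\text{red}=Z_\text{red}\smallsetminus S=(Z\smallsetminus S)_\text{red}$.

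For the Segre classes, blow up $W$: let $\rho\colon\haat Y=\mathrm{Bl}_W\Til Y\to\Til Y$ with exceptional divisor $E_W$, and $b=\pi\circ\rho$. Then $\cI_Z\cdot\cO_{\haat Y}=\cO(-\rho^*E)$ and $\cI_{Z_{(c)}}\cdot\cO_{\haat Y}=\cO(-\rho^*E-E_W)$ are both invertible; since the Segre class of a closed subscheme can be computed on any proper birational modification of $Y$ on which its ideal becomes invertible (factor the modification through the corresponding blow-up and use the projection formula), we obtain
\[
s(Z,Y)=b_*\!\left(\frac{[\rho^*E]}{1+\rho^*E}\cap[\haat Y]\right),\qquad s(Z_{(c)},Y)=b_*\!\left(\frac{[\rho^*E+E_W]}{1+\rho^*E+E_W}\cap[\haat Y]\right).
\]
Subtracting, the elementary identity $\tfrac{H+G}{1+H+G}-\tfrac{H}{1+H}=\tfrac{G}{(1+H)(1+H+G)}$ with $H=\rho^*E$ and $G=E_W$ shows that $s(Z_{(c)},Y)-s(Z,Y)$ is the pushforward under $b$ of a class of the form $c_1(\cO(E_W))\cap\gamma$, hence supported on $|E_W|=\rho^{-1}(W)$; its image is then supported on $b(\rho^{-1}(W))=\pi(W)=S$. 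As $\dim S\le c$, this gives $s(Z_{(c)},Y)_k=s(Z,Y)_k$ for all $k>c$. Finally, over $Y^\circ:=Y\smallsetminus S$ the center $W$ is empty, so $\cI_{Z_{(c)}}$ and $\cI_Z$ have the same extension $\cO(-E)$ to $\pi^{-1}(Y^\circ)$, and hence $s(Z_{(c)}\smallsetminus S,\,Y\smallsetminus S)=s(Z\smallsetminus S,\,Y\smallsetminus S)$ — in particular in dimension $c$.

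The step I expect to be delicate is the pair of Bertini applications: one needs a single general $(H_1,\dots,H_k)$ controlling $\dim W$, $\dim(W\cap E)$, and $Z_{(c)}\smallsetminus Z$ all at once, which is precisely why ``cutting out $Z$'' must be understood scheme-theoretically, $\cI_Z=(f_0,\dots,f_N)$, so that $\mathscr M$ and $\mathscr M|_E$ are genuinely globally generated. A minor technical point is pure-dimensionality: the displayed formulas for $s(-,Y)$ require $\Til Y$ and $\haat Y$ to be pure of dimension $d$, so one first reduces to the case in which no component of $Y$ is contained in $Z$ (the remaining components contribute equal top-dimensional terms to both sides). Granting $(b')$, parts $(b)$ and $(a)$ follow immediately: $(b)$ is the case $c=0$, in which $\dim(S\cap Z)<0$ forces $Z\subseteq Y^\circ$; and $(a)$ is the same construction with one additional general hypersurface, which makes the auxiliary scheme $W$ empty, so that $\cI_{Z'}\cdot\cO_{\Til Y}=\cO(-E)$ and hence $s(Z',Y)=s(Z,Y)$ on all of $Y$, with $(Z')_\text{red}=Z_\text{red}$.
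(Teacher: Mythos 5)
Your argument is correct and is essentially the paper's proof in different clothing: your morphism $\phi$ defined by $\pi^*\cL(-E)$ is the embedding $\Til Y\hookrightarrow\Pbb(\cE)$, your residual scheme $W$ is exactly the paper's $\Pbb(\cK)\cap\Til Y$, the identity $\cI_{Z_{(c)}}\cdot\cO_{\Til Y}=\cI_E\cdot\cI_W$ is the paper's local computation in the proof of Theorem~\ref{thm:tool}, and the Bertini dimension counts for $W$ and $W\cap E$ are the content of \S\ref{sec:proof1}. The only (harmless) deviation is that you replace the citation of the residual formula \cite[Proposition~9.2]{85k:14004} by a second blow-up along $W$ and the explicit identity $\frac{H+G}{1+H+G}-\frac{H}{1+H}=\frac{G}{(1+H)(1+H+G)}$.
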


For $c=0$, part $(b')$ of Theorem~\ref{thm:main} reduces to part $(b)$: in this case $S$
is a set of points disjoint from $Z$, and we can take $Y^\circ = Y\smallsetminus S$.
Part $(a)$ may also formally be seen as a particular case of $(b')$, by allowing $c=-1$ 
(and hence $S=\emptyset$).

If $V$ is an irreducible component of $Z$, it follows from Theorem~\ref{thm:main}$(b')$
with $c=\dim V$
that the coefficient of $V$ in $s(Z,Y)$ equals the coefficient of $V$ in $s(Z_{(\dim V)},Y)$.
As this coefficient equals the multiplicity of $Y$ along $Z$ at $V$ 
(\cite[Example~4.3.4]{85k:14004}), this recovers Samuel's result.\smallskip

Theorem~\ref{thm:main} is motivated by effective computations of Segre
classes and of contributions of components to an intersection.

\begin{example}\label{exa:linsys}
Consider the scheme $Z\subseteq \Pbb^3$ defined by the ideal
\[
(z^2,yz,xz,y^2w-x^2(x+w))\quad.
\]
(This is the flat limit of a family of twisted cubics,
cf.~\cite[Example~9.8.4]{MR0463157}.)  Note that $Z$ may be cut out by
cubics. Standard techniques give $s(Z,\Pbb^3) =[Z]-10[pt]$. If
$X_1,X_2,X_3$ are general cubic hypersurfaces containing $Z$, then by
\cite[Proposition~9.1.1]{85k:14004} the contribution of $Z$ to the
intersection $X_1,X_2,X_3$ is
\begin{equation}\label{eq:exa}
\int c(\cO(3H))^3\cap s(Z^-,\Pbb^3)
\end{equation}
where $H$ denotes the hyperplane class and $Z^-$ is the component of
$X_1\cap X_2\cap X_3$ supported on $Z$. (So $Z^-=Z''\cap Y^\circ$ with
the notation of Theorem~\ref{thm:main}$(b)$.) A Macaulay2 (\cite{M2})
computation shows that the schemes $Z$ and $Z^-$ have the same support
but are not equal. In fact, the scheme $Z^-$ depends on the choice of $X_1,
X_2,X_3$, so it seems {\em a priori\/} difficult to perform the computation 
of~\eqref{eq:exa}, barring an exhaustive analysis of the specific choice of these
hypersurfaces. However, by Theorem~\ref{thm:main}$(b)$ we must have
\[
s(Z^-,\Pbb^3)=s(Z,\Pbb^3) = [Z]-10[pt]\quad,
\]
and it follows that the contribution computed by~\eqref{eq:exa} is $\int (1+9H)\cap 
([Z]-10[pt])= 17$.  Taking this off the B\'ezout number $27$ for the intersection of 
three cubics, it follows that $X_1,X_2,X_3$ meet at $10$ points outside of $Z$ 
(if the ground field is algebraically closed of characteristic~$0$).
\qede\end{example}

The point of this example is that even though the relevant component $Z^-$
of the intersection of the hypersurfaces is {\em not\/} equal to $Z$,
we may carry out the computation of the corresponding contribution to
the intersection as if it were. This is a common issue in applications
of Segre classes to enumerative geometry, where $Z$ may have a compelling
scheme-theoretic description, but the scheme $Z^-$ corresponding to a 
choice of hypersurfaces realizing general constraints may retain features
due to the specific chosen hypersurfaces. Example~\ref{exa:linsys} illustrates 
the fact that the hypothesis that the expected number of divisors in the linear 
system cut out the base scheme in a neighborhood 
(cf.~\cite[Example~4.4.1]{85k:14004}) may be weakened: in general this 
hypothesis should not be expected to be verified, but the corresponding 
formulas remain true if the divisors are general. We should also point out that,
as a rule, judicious use of~\cite[Proposition~4.4]{85k:14004} suffices to 
address this issue; in fact, Theorem~\ref{thm:main}$(b)$ is little more than a 
recasting of this result, presented here in an attempt to streamline its utilization. 

The same complication arises in some approaches to the algorithmic computation of
Segre classes. For example, the computation of the Segre class of a
subscheme in $\Pbb^n$ is reduced in~\cite{EJP} to residual
intersection computations, and these are controlled by the Segre class
of an {\em a priori\/} different subscheme; again,
Example~\ref{exa:linsys} provides an example.  Theorem~3.2
in~\cite{EJP} implicitly includes a proof of Theorem~\ref{thm:main}$(b)$ in the 
particular case $Y=\Pbb^n$, resolving this issue in this case for the numerical 
degrees of the classes (i.e., their push-forward to projective space).
Theorem~2 in~\cite{MR3101819} does the same in the toric setting.
Theorem~\ref{thm:main} has no restrictions on the ambient scheme $Y$, and 
gives the result in the Chow group of $Z$ rather than pushing-forward to $Y$.
Also, Theorem~\ref{thm:main}($b'$) can in principle lead to an improvement in the
speed of such algorithms when only terms of a fixed dimension in the Segre
class are needed.\smallskip

We prove Theorem~\ref{thm:main} in~\S\ref{sec:proof1} as an 
application of a more general observation presented in~\S\ref{sec:tool}, 
to the effect that the Segre class of the zero-scheme of a section of a 
vector bundle $\cE$ is preserved in a range of dimensions by taking 
suitable quotients of $\cE$. See~\S\ref{sec:tool} for a precise statement.  
For the `$c=0$ case', corresponding to part $(b)$ of Theorem~\ref{thm:main},
the commutative algebra 
counterpart of this observation is the statement that such quotients do not 
change the integral closure of the ideal sheaf determined by the section. 
The argument given in~\S\ref{sec:tool} to prove the Segre class identity 
may be used to draw this conclusion (Remark~\ref{rem:ic}); but our proof 
of the Segre class identity bypasses the commutative algebra, and hence 
seems more direct in the context of this paper.

In~\S\ref{sec:SB} we give a second application of the same tool,
proving a `Bertini' type statement for Segre classes; here we require
the characteristic of the field to be~$0$.  Let $Y\subseteq \Pbb^n$
now be a nonsingular variety, and let $X$ be a (possibly 
singular) hypersurface in $Y$. Let $H$ be a general hyperplane. 
By the Bertini theorem, the singular locus of $H\cap X$ is set-theoretically 
equal to the intersection of $H$ with the singular locus of $X$:
\[
(\Sing(H\cap X))_\text{red} = (H\cap \Sing(X))_\text{red}
\]
where $\Sing(-)$ denotes the singularity subscheme. While this equality is not
true at the level of schemes, we prove that it does lift to an equality of Segre classes.

\begin{theorem}\label{thm:SB}
Let $H$ be a general hyperplane, and let $W$ be a pure-dimensional
subscheme of $H\cap Y$. Then $s(\Sing (H\cap X)\cap W, W) = s(H\cap
\Sing(X)\cap W,W)$.
\end{theorem}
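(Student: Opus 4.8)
The plan is to realize $\Sing(H\cap X)$ and $H\cap\Sing(X)$ as zero-schemes of sections of two bundles of principal parts related by a short exact sequence with a line-bundle kernel, and then to apply the tool of \S\ref{sec:tool} on $W$. Write $\cL:=\cO_Y(X)$ and fix $f\in H^0(Y,\cL)$ with $X=\{f=0\}$. The singularity subscheme $\Sing(X)$ is the zero-scheme of the section $j^1 f$ of the bundle of principal parts $P^1_Y(\cL)$, and likewise $\Sing(H\cap X)$ is the zero-scheme of $j^1(f|_{H\cap Y})$, a section of $P^1_{H\cap Y}(\cL|_{H\cap Y})$. First I would take $H$ general: by the Bertini theorem (here characteristic $0$ is used) the variety $H\cap Y$ is nonsingular, $f|_{H\cap Y}\neq 0$, and $(\Sing(H\cap X))_\text{red}=(H\cap\Sing(X))_\text{red}$, so $\Sing(H\cap X)\cap W$ and $H\cap\Sing(X)\cap W$ have the same support and the relevant Chow groups are identified (cf.\ Remark~\ref{rem:supp}).

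Next I would write down the principal-parts conormal sequence for the regular embedding $j\colon H\cap Y\hookrightarrow Y$. Since $H\cap Y$ is a smooth Cartier divisor in $Y$ with conormal bundle $\cO_{H\cap Y}(-1)$, there is an exact sequence of vector bundles on $H\cap Y$
\[
0\longrightarrow \cO_{H\cap Y}(-1)\otimes \cL|_{H\cap Y}\longrightarrow P^1_Y(\cL)|_{H\cap Y}\longrightarrow P^1_{H\cap Y}(\cL|_{H\cap Y})\longrightarrow 0\,,
\]
and, by the functoriality of principal parts under restriction, the surjection carries $(j^1 f)|_{H\cap Y}$ to $j^1(f|_{H\cap Y})$. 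Restricting this sequence to $W$ preserves exactness, the three terms being locally free; it produces a short exact sequence $0\to\cK\to\cE\to\cF\to 0$ of bundles on $W$ with $\cK$ a line bundle, $\cE=P^1_Y(\cL)|_W$, and $\cF=P^1_{H\cap Y}(\cL|_{H\cap Y})|_W$ of rank $\dim Y$. The section of $\cE$ obtained by restricting $j^1 f$ has zero-scheme $H\cap\Sing(X)\cap W$, and its image in $\cF$ has zero-scheme $\Sing(H\cap X)\cap W$.

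I would then apply the tool of \S\ref{sec:tool} on the pure-dimensional scheme $W$ to the quotient $\cE\twoheadrightarrow\cF$ and the section just described. Because $W\subseteq H\cap Y$ we have $\dim W\le \dim Y-1$, hence $\dim W-\rk\cF\le -1$: the range of dimensions in which that result guarantees agreement of the Segre classes of the two zero-schemes contains every $k\ge 0$, and no exceptional subscheme $S$ is needed. This yields $s(H\cap\Sing(X)\cap W,W)=s(\Sing(H\cap X)\cap W,W)$, which is the assertion.

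The main obstacle is verifying that, for $H$ general, the quotient $\cE\twoheadrightarrow\cF$ together with the section of $\cE$ actually satisfies the hypotheses of the result of \S\ref{sec:tool}; this — rather than the mere equality of supports — is the genuine role of the generality of $H$. Concretely one must control the dimensions of the schemes governing the exceptional locus in \S\ref{sec:tool}, which should follow from a generic-smoothness/Bertini argument over the linear system of hyperplanes (again using characteristic $0$). The remaining ingredient, the compatibility of $j^1 f$ with the restriction map of principal parts that pins down the two zero-schemes, is purely formal.
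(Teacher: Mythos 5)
Your setup is exactly the one the paper uses: realize $H\cap\Sing(X)$ and $\Sing(H\cap X)$ as zero-schemes of compatible sections of $(\cP^1_Y\cL)|_{H\cap Y}$ and $\cP^1_{H\cap Y}\cL$, related by the exact sequence with line-bundle kernel $\cL\otimes\cO(-1)|_{H\cap Y}$, and feed this into Theorem~\ref{thm:tool}. But there is a genuine gap at the decisive step. You write that since $\dim W-\rk\cF\le -1$, ``the range of dimensions \dots contains every $k\ge 0$, and no exceptional subscheme $S$ is needed.'' Theorem~\ref{thm:tool} does not work that way: the integer $c$ there is the \emph{actual} dimension of $\Pbb(\cK)\cap\Til Y$, not an expected dimension computed from ranks. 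Since $\cK$ is a line bundle, $\Pbb(\cK)$ is a section of $\Pbb(\cE)$ over $W$, hence has dimension $\dim W$, the same as $\Til Y=B\ell_{Z}W$; nothing forces their intersection inside $\Pbb(\cE)$ to be empty, or even small. You acknowledge this in your closing paragraph and defer it to ``a generic-smoothness/Bertini argument over the linear system of hyperplanes,'' but that is precisely the content of the proof, not a routine appeal to Bertini: both the center $\Pbb(\cK)=s_H$ \emph{and} the blow-up $B\ell_{\underline{\Sing(X)}}(H\cap Y)$ move with $H$, so no off-the-shelf Bertini statement applies.

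The paper's proof of this point is a specific incidence-variety dimension count. Over a trivializing open set $U$ one identifies the fiber of $\Pbb(\cP^1_U\cL)$ with the family of hyperplanes containing a fixed center $P$, so points are pairs $(y,H)$, and $(y,H)\in\Pbb(\Omega^1_U\otimes\cL)$ exactly when $y\in H$. If for \emph{every} $H$ the section $s_H$ met $B\ell_{\underline{\Sing(X)}}(H\cap Y)$, the locus of such pairs in $B\ell_{\Sing(X)}Y$ would dominate the $\Pbb^m$ of hyperplanes and hence be all of $B\ell_{\Sing(X)}Y$ (which has dimension $m$), forcing $B\ell_{\Sing(X)}Y\subseteq\Pbb(\Omega^1_Y\otimes\cL)$; this is absurd because over $y\notin X$ the fiber of the blow-up is $s_F(y)$ with $F(y)\ne 0$. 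Some such argument is required to get $c=-1$; without it your proof establishes only the formal framework, and the Segre-class identity does not follow. (The rest of your write-up --- the exact sequence of principal parts, the compatibility of the sections, the identification of Chow groups via equality of supports, and the reduction to $W=H\cap Y$ --- is correct and matches the paper.)
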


Again this statement could be proven from commutative algebra 
considerations. The argument given here follows easily from the tool
presented in~\S\ref{sec:tool}, and seems more direct.
Theorem~\ref{thm:SB} may be used to prove that certain characteristic
classes associated with $X$ behave as expected with respect to general
hyperplane sections.  There are other approaches to such questions;
see for example \cite[Lemma~1.2]{MR1311826}.
\smallskip

{\em Acknowledgments.}  The author's research was supported in part by
the Simons foundation and by NSA grants H98230-15-1-0027 and 
H98230-16-1-0016. The author is grateful to Caltech for hospitality while 
this work was carried out. The author especially thanks Prof.~Matilde Marcolli for
suggesting the title of this paper. The author also thanks Sam Huckaba
for insightful conversations on the commutative algebraic aspects of the 
questions studied in this paper.


\section{The main tool}\label{sec:tool}

In this note $Y$ denotes a separated pure-dimensional scheme of finite type 
over an algebraically closed field, embeddable in a nonsingular scheme.
We are interested in the Segre classes $s(Z,Y)$ of subschemes $Z$ of
$Y$. By~\cite[Lemma~4.2]{85k:14004} the Segre class of $Z$ in $Y$ is a
linear combination of the Segre classes in the irreducible components
of $Y$; so we may and will assume that $Y$ is a variety. Also,
$s(Y,Y)=[Y]$; so we will assume the subschemes we consider are
properly contained in $Y$.

Every subscheme $Z$ of $Y$ may be realized as the zero-scheme of a
section of a vector bundle $\cE$ on $Y$, and this yields an embedding
of the blow-up $B\ell_ZY$ of $Y$ along $Z$ as a subscheme of
$\Pbb(\cE)$ (\cite[B.8.2]{85k:14004}). We consider the following
situation:
\begin{itemize}
\item $\cE$, $\cF$ are vector bundles on $Y$;
\item $s_\cE$, $s_\cF$ are sections of $\cE$, $\cF$, with zero-schemes
  $Z$, $Z'$, respectively;
\item We have an epimorphism $p: \cE\to \cF$ with kernel $\cK=\ker p$,
  such that the diagram
\[
\xymatrix{
\cE \ar[rr]^p & & \cF \\
& Y \ar[ul]^{s_\cE} \ar[ur]_{s_\cF}
}
\]
is commutative;
\end{itemize}

Clearly $Z\subseteq Z'$, in the sense that the ideal sheaf $\cI_{Z,Y}$
of $Z$ in $Y$ contains $\cI_{Z',Y}$.

As recalled above, the blow-up $B\ell_ZY$ may be embedded in the
projectivization $\Pbb(\cE)$. The image of this embedding is the
variety $\Til Y$ obtained as the closure of the image in $\Pbb(\cE)$ of
the rational section $Y \dashrightarrow \Pbb(\cE)$ induced by
$s_\cE$. We will denote by $E\subseteq \Til Y$ the exceptional divisor.
We adopt the convention that the empty set has negative dimension.

\begin{theorem}\label{thm:tool}
Let $c=\dim (\Pbb(\cK)\cap \Til Y)$ and assume that no component of
$\Pbb(\cK)\cap \Til Y$ is contained in $E$.
Then there exists a 
subscheme $S\subseteq Y$ of dimension $\le c$ such that 
$\dim (S\cap Z)<c$ and
\begin{itemize}
\item[$(i)$] $(Z'\smallsetminus S)_\text{red} = (Z\smallsetminus S)_\text{red}$;
\item[$(ii)$] $s(Z'\smallsetminus S,Y\smallsetminus S)_c
=s(Z\smallsetminus S, Y\smallsetminus S)_c$; 
\item[$(iii)$] $s(Z',Y)_k=s(Z,Y)_k$ for $k> c$.
\end{itemize}
\end{theorem}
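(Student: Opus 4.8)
The plan is to compare everything on the blow-up $\Til Y \subseteq \Pbb(\cE)$, where Segre classes are computed as pushforwards of powers of $\cO(1)$ against the exceptional divisor, and to exploit the hypothesis that $\Pbb(\cK)\cap\Til Y$ is small (dimension $\le c$). First I would recall the standard setup: since $Z$ is the zero-scheme of $s_\cE$, the blow-up $\pi\colon\Til Y\to Y$ embeds in $\Pbb(\cE)$ with exceptional divisor $E$, and $s(Z,Y) = \pi_*\bigl(\sum_{j\ge 0} c_1(\cO_{\cE}(1))^j\cap[E]\bigr)$ (this is $\pi_*(\frac{[E]}{1+E})$ in the usual shorthand), by \cite[\S4.2]{85k:14004} together with the identification of $\cO_{\cE}(1)|_E$ with the normal bundle data. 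Similarly $s(Z',Y)$ is computed on $B\ell_{Z'}Y\subseteq\Pbb(\cF)$. The key geometric point is that the epimorphism $p\colon\cE\to\cF$ induces a rational map $\Pbb(\cF)\dashrightarrow\Pbb(\cE)$ — or rather, $\Pbb(\cF)\hookrightarrow\Pbb(\cE)$ away from $\Pbb(\cK)$, compatibly with the two rational sections of $Y$ — so that $\Til Y$ and $B\ell_{Z'}Y$ agree as subvarieties of $\Pbb(\cE)$ outside $\Pbb(\cK)$. Thus $\Til Y$ is the strict transform, and the birational morphism $q\colon\Til Y\to B\ell_{Z'}Y$ is an isomorphism over the complement of $\Pbb(\cK)\cap\Til Y$; its non-isomorphism locus, and the image thereof in $Y$, both have dimension $\le c$ — this image is the subscheme $S$, and the hypothesis that no component of $\Pbb(\cK)\cap\Til Y$ lies in $E$ guarantees $\dim(S\cap Z)<c$.

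Next I would verify $(i)$: away from $S$, the two blow-ups coincide and hence so do the ideal sheaves of $Z$ and $Z'$ up to radical — in fact the supports agree because outside $S$ the section $s_\cF$ vanishes exactly where $s_\cE$ does, as $p$ is an isomorphism on fibers there (the kernel $\cK$ only matters over $S$). For $(ii)$ and $(iii)$, the strategy is to push $\cO_{\cE}(1)$ down along $q$ and compare. Restricting to $Y^\circ := Y\smallsetminus S$, the map $q$ is an isomorphism, $\cO_{\cE}(1)$ restricted to $\Til Y$ over $Y^\circ$ agrees with $\cO_{\cF}(1)$ restricted to $B\ell_{Z'}Y$ over $Y^\circ$ (because the embedding $\Pbb(\cF)\hookrightarrow\Pbb(\cE)$ over $Y^\circ$ pulls $\cO_{\cE}(1)$ back to $\cO_{\cF}(1)$), and the exceptional divisors match; therefore $s(Z\smallsetminus S,Y\smallsetminus S) = s(Z'\smallsetminus S,Y\smallsetminus S)$ in \emph{every} dimension over $Y^\circ$, which gives $(ii)$ a fortiori. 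For $(iii)$, I would run a dimension count on the "correction terms" supported over $S$: the difference $s(Z,Y)-s(Z',Y)$, or rather the discrepancy in the pushforward formulas, is represented by cycles supported on $S\cup(Z\cap S)$, all of dimension $\le c$; hence the components in dimension $k>c$ agree. More carefully, I would write $s(Z,Y) = q_*(\text{class on }\Til Y)$ is not quite right since $q$ goes to $B\ell_{Z'}Y$ not to a blow-up of $Y$ realizing $Z$; instead I would compare both $\pi_*$ and $\pi'_* q_*$ and observe $\pi = \pi'\circ q$, so the only discrepancy between $\pi_*(\frac{[E]}{1+E})$ and $\pi'_*(\frac{[E']}{1+E'})$ comes from $q_*[E]$ versus $[E']$ and from $q^*\cO_{\cF}(1)$ versus $\cO_{\cE}(1)$, both of which differ only by classes supported on the exceptional locus of $q$, which maps into $S$, hence has image of dimension $\le c$.

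The main obstacle will be step $(iii)$: making the dimension estimate on the correction terms rigorous. It is not enough that the non-isomorphism locus of $q$ has small dimension \emph{in $Y$}; one must control the dimension of the relevant cycle classes \emph{on $\Til Y$ and $B\ell_{Z'}Y$} and check that after pushing forward to $Y$ they land in dimension $\le c$. The subtle point is that $q$ could contract a high-dimensional subvariety of $\Til Y$ to something of dimension $\le c$ in $B\ell_{Z'}Y$, so I need that $q_*$ of any class supported on the exceptional locus of $q$, once pushed to $Y$, has dimension $\le c$ — which follows because $\pi(\text{exc. locus of }q)\subseteq S$ and $\dim S\le c$, and pushforward does not raise dimension. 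I would also need to be careful that $\cO_{\cE}(1)$ and $q^*\cO_{\cF}(1)$ differ by a divisor class supported on the $q$-exceptional locus (this is where the explicit description of $\Pbb(\cF)\hookrightarrow\Pbb(\cE)$ via $p$ enters: the twist is by a multiple of the exceptional data of the rational map, supported over $\Pbb(\cK)$), so that intersecting with either and capping against the exceptional divisor produces classes differing only in dimension $\le c$. Collecting these, parts $(i)$--$(iii)$ follow; and I would close by noting that in the "generic" situation where $\Pbb(\cK)\cap\Til Y = \emptyset$ (i.e.\ $c=-1$), $q$ is a global isomorphism and one recovers $s(Z,Y)=s(Z',Y)$ outright.
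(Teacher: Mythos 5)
Your high-level skeleton (work on $\Til Y\subseteq\Pbb(\cE)$, identify the discrepancy locus as $\Pbb(\cK)\cap\Til Y$, push down to $S=\pi(\Pbb(\cK)\cap\Til Y)$, and kill the correction terms by the dimension bound and the localization sequence) matches the paper's strategy, and your handling of $(i)$ and of the hypothesis that no component of $\Pbb(\cK)\cap\Til Y$ lies in $E$ is essentially right. But the engine you use to get $(ii)$ and $(iii)$ — the claim that the projection induces an \emph{isomorphism} $q\colon\Til Y\to B\ell_{Z'}Y$ over $Y\smallsetminus S$, with matching exceptional divisors and $\cO(1)$'s — is false, and the paper's own Example~2.2 is a counterexample: there $Y=\Abb^2$, $s_\cE=(x^2,y^2,xy)$, $\Pbb(\cK)\cap\Til Y=\emptyset$ (so $S=\emptyset$), yet $B\ell_ZY=B\ell_{(x,y)}\Abb^2$ is smooth while $B\ell_{Z'}Y=B\ell_{(x^2,y^2)}\Abb^2$ is singular; the induced map is finite and birational but not an isomorphism (cf.\ Remark~2.4 of the paper, which only asserts a \emph{regular finite} morphism). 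Relatedly, ``$\Pbb(\cF)\hookrightarrow\Pbb(\cE)$'' has the arrow backwards for the convention in force here (a surjection $\cE\to\cF$ gives the projection $\Pbb(\cE)\dashrightarrow\Pbb(\cF)$ with center $\Pbb(\cK)$), and ``$p$ is an isomorphism on fibers'' outside $S$ is not true — $p$ has kernel $\cK_y$ at every $y$; what matters is whether the limiting directions of $s_\cE$ recorded by $\Til Y$ meet $\Pbb(\cK)$.

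What is actually true, and what your proposal is missing, is the weaker scheme-theoretic statement that drives the whole proof: locally writing $\pi^{-1}(s_i)=\hat s_i e$ with $e$ an equation for $E$, one computes $\cI_{Z'}\cdot\cO_{\Til Y}=(\hat s_1,\dots,\hat s_M)\cdot(e)=\cI_{S'}\cdot\cI_E$ with $S'=\Pbb(\cK)\cap\Til Y$; i.e., $S'$ is exactly the residual scheme to $E$ in $\pi^{-1}(Z')$. This computation is the technical heart, and nothing in your write-up substitutes for it. Once you have it, you do not need any isomorphism of blow-ups: over $Y\smallsetminus S$ the pullback ideal $\cI_{Z'}\cdot\cO_{\Til Y}$ \emph{equals} $\cI_E$, so birational invariance of Segre classes applied to $\pi$ (which only requires $\pi$ proper and birational) gives $s(Z'\smallsetminus S,Y\smallsetminus S)=\pi_*s(E,\Til Y)|_{Y\smallsetminus S}=s(Z\smallsetminus S,Y\smallsetminus S)$ in all dimensions, and globally the residual formula \cite[Proposition~9.2]{85k:14004} shows $s(\pi^{-1}(Z'),\Til Y)$ and $s(E,\Til Y)$ differ by a class supported on the $c$-dimensional $S'$, yielding $(iii)$. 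So: replace ``$q$ is an isomorphism off $\Pbb(\cK)\cap\Til Y$'' by the residual-scheme computation, and route the comparison through $\pi_*$ rather than through $q$; as written, the proof of $(ii)$ and $(iii)$ rests on a false premise.
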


\begin{remark}\label{rem:supp}
The equality of supports in $(i)$ allows us to identify the Chow groups 
as needed for $(ii)$ and $(iii)$. Indeed, if $(i)$ holds, then $A_c(Z'\smallsetminus S)
=A_c(Z\smallsetminus S)$ (\cite[Example 1.3.1]{85k:14004}); and recall
(\cite[Proposition~1.8]{85k:14004}) that for all $k\ge 0$ there is an exact sequence
\[
\xymatrix{
A_k(Z\cap S) \ar[r] & A_k(Z) \ar[r] & A_k(Z\smallsetminus S) \ar[r] & 0\quad;
}
\]
if $\dim (S)\le c$ and $(i)$ holds, then it follows that for $k> c$ we have canonical isomorphisms
\[
A_k(Z) \cong A_k(Z\smallsetminus S) \cong A_k((Z\smallsetminus S)_\text{red}) 
\cong A_k((Z'\smallsetminus S)_\text{red}) \cong A_k(Z'\smallsetminus S) \cong A_k(Z')\quad.
\]
The claimed equality $s(Z,Y)_k=s(Z',Y)_k$ holds in this group.
\qede
\end{remark}

\begin{example}
It is important to note that $Z\ne Z'$ in general, even if $\Pbb(\cK)\cap \Til Y=\emptyset$. 
For example, let
$Y=\Abb^2$ with coordinates $x,y$, and let $\cE=\cO^{\oplus 3}$,
projecting to the first two factors $\cF=\cO^{\oplus 2}$. We have
$\cK\cong \cO$, identified with the third factor of $\cE$. Define
$s_\cE$ by
\[
(x,y) \mapsto (x^2,y^2,xy)\quad.
\]
It is straightforward to verify that $\Til Y$ is given by the ideal
\[
(sy-ux,tx-uy,st-u^2)
\]
in $\Pbb(\cE)\cong \Abb^2\times \Pbb^2$, where $(s:t:u)$ are
homogeneous components in the $\Pbb^2$ factor. The projectivization
$\Pbb(\cK)$ consists of $\Abb^2\times \{(0:0:1)\}$, hence it has empty
intersection with $\Til Y$. On the other hand, the ideals of $Z$, $Z'$
are $(x^2,y^2,xy)$, $(x^2,y^2)$, respectively; so $Z\ne Z'$.
\qede\end{example}

\begin{proof}[Proof of Theorem~\ref{thm:tool}]
Identify $\Til Y$ with $B\ell_ZY$; the blow-up map is the projection 
$\pi: \Til Y \to Y$, and $E=\pi^{-1}(Z)$ is the exceptional divisor. 
Since $Z\subseteq Z'$, we have $E\subseteq \pi^{-1}(Z')$. We will 
verify that the residual scheme to $E$ in $\pi^{-1}(Z')$ is supported 
on $S':=\Pbb(\cK)\cap \Til Y$. We claim that the statement of the 
theorem follows from this, by setting $S=\pi(S')$. Indeed, since no 
component of $S'$ is contained in $E$, we have $\dim (Z\cap S)<c$. 
Since (set-theoretically) $Z'=\pi(\pi^{-1}(Z'))=\pi(E\cup S') =Z\cup S$,
the equality $(i)$ of supports holds. Further, we will have
\[
s(Z',Y)_k\overset{(1)}=\pi_* s(\pi^{-1}(Z'),\Til Y)_k \overset{(2)}= \pi_* s(E,\Til Y)_k 
\overset{(3)}= s(Z,Y)_k
\]
for $k> c$, 
where equalities $(1)$ and $(3)$ hold by the birational invariance of Segre
classes (\cite[Proposition~4.2(a)]{85k:14004}), and $(2)$ follows
from the residual formula for Segre classes (\cite[Proposition~9.2]{85k:14004}).
This implies $(iii)$. The residual formula also
shows that $s(E,\Til Y)_c$ and $s(\pi^{-1}(Z),\Til Y)_c$ differ
by a class supported on $S'$. It follows that 
$s(E\smallsetminus S',\Til Y\smallsetminus S')_c
=s(\pi^{-1}(Z)\smallsetminus S',\Til Y\smallsetminus S')_c$, 
and $(ii)$ follows, again by the birational invariance of Segre classes.

Thus, in order to prove the theorem it suffices to show that the residual 
scheme to $E$ in $\pi^{-1}(Z')$ equals $S':=\Pbb(\cK)\cap \Til Y$.
To compute this residual scheme we may work locally, hence
assume that $\cE$, $\cK$ and $\cF$ are trivial and that 
$p:\cE=\cO^{\oplus N}\to \cF=\cO^{\oplus M}$ is the projection onto the first 
$M$ factors. 
Write the section $s_\cE$ in components as
$s_\cE=(s_1,\dots,s_N)$; the induced rational section
$Y\dashrightarrow \Pbb(\cE)=Y\times \Pbb^{N-1}$ is $(s_1:\cdots :
s_N)$, and this lifts to the embedding
\[
\rho: \Til Y \to Y\times \Pbb^{N-1}\quad.
\] 
The exceptional divisor $E$ is given by a section $e$ of $\cO(E)$. 
Locally, $E$ is defined by the ideal $(e)=(\pi^{-1} s_1,\dots, \pi^{-1} s_N)$. 
We can factor
\[
\pi^{-1}(s_i) = \hat s_i e
\]
for $i=1,\dots, N$; then $\hat s_1,\dots, \hat s_N$ locally generate 
the unit ideal $(1)$, and the embedding $\rho: \Til Y \to Y\times
\Pbb^{N-1}$ is given by 
$\tilde y \mapsto (\tilde y,(\hat s_1(\tilde y):\cdots: \hat s_N(\tilde y)))$.

With the above notation, the ideal for $S'=\Pbb(\cK)\cap \Til Y$ in $\Til Y$ is
$\cI_{S',\Til Y}=(\hat s_1,\dots, \hat s_M)$.

Now $Z'$ is defined by the ideal $(s_1,\dots, s_M)$ in $Y$. Therefore
$\pi^{-1}(Z')$ has ideal
\[
(\hat s_1 e,\dots, \hat s_M e) = (\hat s_1,\dots, \hat s_M)\cdot (e) = 
\cI_{S',\Til Y}\cdot \cI_{E,\Til Y}\quad.
\]
This verifies that the residual scheme to $E$ in $\Til Y$ is 
$S'=\Pbb(\cK)\cap \Til Y$ and concludes the proof.
\end{proof}

\begin{remark}\label{rem:Samuel}
Suppose $c=\dim V$, where $V$ is an irreducible component of $Z$.
By Theorem~\ref{thm:tool}$(i)$, $V$ is also an irreducible component
of $Z'$, and by Theorem~\ref{thm:tool}$(ii)$, $V$ appears with the
same multiplicity in $s(Z,Y)$ and in $s(Z', Y)$. Thus the multiplicity
of $Y$ along $Z$ and along $Z'$ at $V$ coincide. This will 
recover the result by Samuel recalled at the beginning of~\S\ref{sec:intro}.
\qede\end{remark}

\begin{remark}\label{rem:ic}
If $c=0$ in Theorem~\ref{thm:tool}, then $S'=\Pbb(\cK)\cap \Til Y$ consists 
of a finite set disjoint from $E$; replacing $Y$ by the complement of $S=\pi(S')$, 
we may assume $\Pbb(\cK)\cap \Til Y=\emptyset$. The resulting situation
has a compelling interpretation in terms of commutative algebra. We have a 
commutative diagram of rational maps
\[
\xymatrix{
\Pbb(\cE) \ar@{-->}[rr]^p & & \Pbb(\cF) \\
& Y \ar@{-->}[ul]^{s_\cE} \ar@{-->}[ur]_{s_\cF}
}
\]
and the projection induces a birational morphism between the closures
of the images of $s_\cE$ and $s_\cF$, i.e.,
\[
\xymatrix{
p|_Y\,: \, \Til Y= B\ell_Z Y \ar@{-->}[r] & B\ell_{Z'} Y\quad.
}
\]
If $\Pbb(\cK)\cap \Til Y=\emptyset$, this morphism is
regular and finite, because $\Til Y$ is disjoint from the center of the projection.
This implies that the ideal of $Z'$ is a reduction of the ideal of $Z$ at every
point of $Z$, cf.~\cite[Proposition~1.44]{MR2153889}. Therefore, we can 
conclude that if $c=0$ in Theorem~\ref{thm:tool}, then the ideals of $Z$ and
$Z'$ have the same integral closure at every point of~$Z$.
Theorem~\ref{thm:tool} may be seen as a Segre class version of this observation,
extended to all~$c\ge 0$.
\qede\end{remark}


\section{Proof of Theorem~\ref{thm:main}}\label{sec:proof1}

As above, $Y$ is a pure-dimensional scheme and $Z\subseteq Y$ is
a closed subscheme, and we can in fact assume that $Y$ is a variety and
$Z\subsetneq Y$ (cf.~the beginning of~\S\ref{sec:tool}).

Let $\cA$ be a line bundle on $Y$, and let $L\subseteq \Pbb H^0(\cA,Y)$ be a 
linear system of which $Z$ is the base 
scheme. For example, if $Y\subseteq \Pbb^n$, then $L$ can be the restriction 
of the linear system of degree-$d$ hypersurfaces containing~$Z$, provided 
$d$ is large enough; in fact, the maximum degree in a set of generators of any 
ideal defining $Z$ scheme-theoretically in~$\Pbb^n$ will do. 
If $X_1,\dots,X_N$ are general elements of~$L$ and $N\gg 0$, then $X_1,
\dots, X_N$ generate~$L$, and $Z=X_1\cap \cdots \cap X_N$ scheme-theoretically.
Equivalently, $Z$ may be realized as the zero-scheme of the section
$s_\cE$ of $\cE:=\cA^{\oplus N}$ given by $(s_1,\dots,s_N)$, where $s_i$
is a section of~$\cA$ defining $X_i$.  As in~\S\ref{sec:tool}, we consider the 
closure $\Til Y$ of the image of the rational section $(s_1:\dots: s_N)$ of 
$\Pbb(\cE)$ determined by~$s_\cE$. For $c\ge 0$, we let $M=\dim Y-c$, and
we let $Z_{(c)}=X_1\cap \cdots \cap X_{\dim Y-c}$ be the zero-scheme of the 
section $s_\cF$ of $\cF:=\cA^{\oplus M}$ given by $(s_1,\dots, s_M)$. 

We are then in the situation of \S\ref{sec:tool}. Let $\cK$ be the kernel of the 
projection $p:\cE\to \cF$. By Theorem~\ref{thm:tool}, in order to prove 
Theorem~\ref{thm:main} it suffices to show that $\Pbb(\cK)\cap \Til Y$ has
pure dimension $c$ (if nonempty) and $\Pbb(\cK)\cap E$ has dimension $<c$.
 We have
\[
\Pbb(\cE)=\Pbb(\cA^{\oplus N})\cong \Pbb(\cO_Y^{\oplus N}) =
Y\times \Pbb^{N-1}\quad;
\]
$\Pbb(\cF)\cong Y\times \Pbb^{M-1}$ under the same identification, and
$\Pbb(\cK)$ is defined by the intersection of $M$ hypersurfaces
$Y\times H^{(1)},\dots,Y\times H^{(M)}$, where $H^{(i)}$ are general
hyperplanes. Now the linear system cut out on $\Til Y$ by the hypersurfaces
$Y\times H$, with $H$ a hyperplane, is base-point free. It follows that 
every component of 
$\Pbb(\cK)\cap \Til Y$ has codimension $M=\dim Y-c$ in $\Til Y$, 
hence dimension~$c$. 
The dimension of every component of $\Pbb(\cK)\cap E$ is $c-1$ by
the same token, concluding the proof.
\qed\smallskip

If $V$ is an irreducible component of $Z$, the $c=\dim V$ case of 
Theorem~\ref{thm:main}$(b')$ recovers Samuel's result on multiplicities,
cf.~Remark~\ref{rem:Samuel}.

As a consequence of the considerations in Remark~\ref{rem:ic}, we see that 
for $c=0$ this argument in fact proves 
that the local equations of $\dim Y$ general elements of $L$ generate a 
reduction of the ideal $\cI_{Z,Y}$ of $Z$ in $Y$ at every point $p\in Z$. 
For example, if $Y\subseteq \Pbb^n$, $\dim Y$ general homogeneous 
polynomials of degree~$d$ containing $Z$ (where $d\ge $ maximum degree 
of a polynomial in any set of generators of any ideal for $Z$ in $\Pbb^n$) 
generate a reduction 
of $\cI_{Z,Y}$ at every point of $Z$. In the local case, it is known that a 
reduction may in fact be generated by $\ell$ generators, where $\ell$ equals 
the {\em analytic spread\/} of the ideal (\cite[Proposition~8.3.7]{MR2266432}).  
In our context, this equals $1$ plus the dimension of the fiber of the 
exceptional divisor $E$ at the given point. Theorem~\ref{thm:main}$(b)$ 
amounts to the consequence for Segre classes of a global version of this 
result.


\newpage
\section{A Segre-Bertini theorem}\label{sec:SB}

We now move on to the proof of Theorem~\ref{thm:SB}. In this section
we assume that the characteristic of the ground field is~$0$.

We consider a nonsingular variety $Y\subseteq \Pbb^n$ and let
$X\subseteq Y$ be a hypersurface.  We denote by $Z=\Sing(X)$ the {\em
  singularity subscheme\/} of $X$, i.e., the subscheme of~$Y$ locally
defined by an equation for $X$ and by its partial derivatives.
The Segre classes of $Z$ in $X$ and in $Y$ play an important role in
the theory of characteristic classes for singular varieties: there are
formulas relating directly the class $s(Z,X)$ with the {\em
  Chern-Mather\/} class of $X$ and the class $s(Z,Y)$ with the {\em
  Chern-Schwartz-MacPherson\/} class of $X$ (see e.g.,
\cite[Proposition~2.2]{MR2020555}).

By the ordinary Bertini theorem $\uY:=H\cap Y$ is nonsingular for a
general hyperplane $H$, and the singularity subscheme of $\uX:=H\cap
X$ is supported on $\underline{\Sing(X)}:=H\cap Z$.  It is clear that
$\underline{\Sing(X)}\subseteq \Sing(\uX)$, but these two subschemes
may be different, even for general~$H$.
According to Theorem~\ref{thm:SB}, this difference does not affect
their Segre classes: we will prove that if $W$ is any pure-dimensional
subscheme of~$\uY$, then
\begin{equation}\label{eq:SB}
s(\Sing(\uX)\cap W,W) = s(\underline{\Sing(X)}\cap W,W)\quad.
\end{equation}
For example, this holds for $W=\uX$ and $W=\uY$, the cases most
relevant to characteristic classes as mentioned above. Also, since
$\uY$ is nonsingular, the same equality follows for any nonsingular
variety $W\subseteq \Pbb^n$ containing $\Sing(\uX)$ (by
\cite[Example~4.2.6(a)]{85k:14004}).

The main value of identities such as~\eqref{eq:SB} is that they yield
tools for the effective computation of Segre classes. For example, for
$W=\uY$, \eqref{eq:SB} implies that
\[
s(\Sing(\uX),\uY)=H\cdot s(\Sing(X),Y)
\]
by essentially the same argument used in the proof of~\cite[Claim
  3.2]{MR3076849},
and this implies an `adjunction formula' for Chern-Schwartz-MacPherson
classes (cf.~\cite[Proposition~2.6]{MR3031565}).

Like Theorem~\ref{thm:main}, Theorem~\ref{thm:SB} (i.e.,
\eqref{eq:SB}) follows from Theorem~\ref{thm:tool}: it suffices to
realize the two schemes as zero-schemes of compatible sections of
vector bundles under a projection from a suitable center. The main 
technical point of the proof is the existence of such a center.

\begin{proof}
As in the proof of Theorem~\ref{thm:main} we may assume that $W$ is a
variety.  Let $\cL=\cO(X)$; so $X$ is defined by a section $F$ of
$\cL$ on $Y$. This section lifts to a section $s_F$ of the bundle of
principal parts $\cP^1_Y\cL$, and the subscheme $\Sing(X)$ is the
zero-scheme of this section. Therefore, $\underline{\Sing(X)}$ is the
zero-scheme of the restriction of $s_F$ to $\uY$, a section of
$(\cP^1_Y\cL)|_\uY$.  By the same token, $\Sing(\uX)$ is the
zero-scheme of the section $s_\uF$ of $\cP^1_\uY\cL$ determined by the
restriction $\uF$ of $F$ to $\uY$ (where for brevity we denote by $\cL$
the restriction $\cL|_\uY$).  These sections are compatible with
the natural surjective morphism of vector bundles $(\cP^1_Y\cL)|_\uY
\to \cP^1_\uY\cL$: the diagram
\[
\xymatrix{
(\cP^1_Y\cL)|_\uY \ar[rr]^p & &  \cP^1_\uY\cL \\
& \uY \ar[lu]^{(s_F)|_\uY} \ar[ru]_{s_\uF}
}
\]
is commutative. We are therefore in the situation studied in~\S\ref{sec:tool}, 
and in order to complete the proof we only need to verify that, for a general 
$H$, the projectivization of the kernel of~$p$ is disjoint
from the closure of the image of $s_F(W)$ in $\Pbb((\cP^1_Y\cL)|_W)$.
{\em A fortiori,\/} it suffices to show that this is the case for $W=\uY$.
This will verify the hypothesis of Theorem~\ref{thm:tool} with $c=-1$,
hence prove the equality of Segre class in all dimensions.

Recall that, by \cite[16.4.20]{MR0238860}, there is an exact sequence
\[
\xymatrix{
0 \ar[r] & \cL\otimes \cO(-1)|_\uY \ar[r] & (\cP^1_Y\cL)|_\uY \ar[r]^-p & \cP^1_\uY\cL \ar[r] & 0
}
\]
extending the standard exact sequence of differentials from
\cite[Proposition~II.8.12]{MR0463157},
\[
\xymatrix{
0 \ar[r] & \cO(-1)|_\uY \ar[r] & {\Omega_Y}|_\uY \ar[r] & \Omega_\uY \ar[r] & 0
}
\]
tensored by~$\cL$. (This sequence is exact on the left since $Y$ and
$\uY$ are nonsingular, \cite[Proposition~II.8.17]{MR0463157}.)
Therefore, the kernel $\cK$ of $p$ is the image of $\cL\otimes
\cO(-1)|_\uY$ in $(\cP^1_Y\cL)|_\uY$, and $p:
\Pbb((\cP^1_Y\cL)|_\uY)\dashrightarrow \Pbb(\cP^1_\uY\cL)$ is the
projection with center at the section $\Pbb(\cK)$.  By Theorem~\ref{thm:tool},
in order to prove Theorem~\ref{thm:SB} it suffices to prove that, for a
general choice of~$H$, $s_H=\Pbb(\cL\otimes \cO(-1)|_\uY)$ is disjoint
from $B\ell_{\underline{\Sing(X)}}\uY$ in
$\Pbb((\cP^1_Y\cL)|_\uY)$.\smallskip

To study this question, it is helpful to work over a trivializing open
set. (A local trivialization for the bundle of principal parts is
discussed in e.g., \cite[\S{A}.4]{MR96c:14023}.) Let $U\subseteq Y$ be
a dense open set such that
\begin{equation}\label{eq:triv}
\Pbb(\cP^1_U \cL) \cong U\times \Pbb^m
\end{equation}
with $m=\rk \Pbb(\cP^1_U\cL) = \dim Y$. We may choose $U$ so that the
projection from a fixed subspace $P:=\Pbb^{n-m-1}$ is an isomorphism
on each embedded tangent space to $Y$ at $y\in U$; we then have a
natural identification of the fiber $\Pbb^m$ in~\eqref{eq:triv} with
the subspace of the dual space ${\Pbb^n}^\vee$ consisting of
hyperplanes containing $P$.

We will denote by $(y,H)$ the point of $\cP^1_U\cL$ determined by the
choice of a point $y\in U$ and a hyperplane $H\supseteq P$.

With this notation, $(y,H)\in \Pbb(\Omega^1_U\otimes \cL)\subseteq
\Pbb(\cP^1_U\cL)$ if and only if $y\in H$.
Further, each $H\supseteq P$ determines a section of $\Pbb^1(\cP^1_U\cL)$,
given by $y \mapsto (y,H)$ for $y\in U$, and hence a section of $\Pbb((\Omega^1_U
\otimes\cL)|_\uU)$ for $\uU=H\cap U\subseteq \uY$.  It is straightforward to
verify that this section agrees with the restriction to $U$ of the
section $s_H$ determined by $H$ as explained above.

We have to prove that, for a general hyperplane $H$, $s_H$ is disjoint
from $B\ell_{\underline{\Sing(X)}}\uY$ in $\Pbb((\cP^1_Y\cL)|_\uY)$. 
It suffices to prove that there is {\em one\/} such hyperplane.
Arguing by contradiction, assume that for all $H$ there exists a point
$y\in Y$ such that $s_H$ and $B\ell_{\underline{\Sing(X)}}\uY$ meet
over $y$. After a choice of $U$ and $P$ as above, we may represent
points of $\Pbb(\cP^1_U\cL)$ by pairs $(y,H)$ with $y\in U$ and
$H\supseteq P$. By our assumption, we would have that for a general
$H$ containing $P$ there exists $y\in \uU$ such that
\begin{equation}\label{eq:yH}
(y,H)\in B\ell_{\underline{\Sing(X)}}\uY \subseteq B\ell_{\Sing(X)} Y\quad.
\end{equation}
The set of such $(y,H)\in B\ell_{\Sing(X)} Y$ is $m$-dimensional, since it 
dominates the fiber $\Pbb^m$ in the trivialization~\eqref{eq:triv}.  
Since $\dim B\ell_{\Sing(X)} Y=m$, it follows that {\em every\/} $(y,H)\in
B\ell_{\Sing(X)} Y$ is of this type. But $(y,H)\in
\Pbb(\Omega^1_U\otimes \cL)$ since $y\in \uU\subseteq H$. Thus, it
would follow that
\[
B\ell_{\Sing(X)} Y\subseteq \Pbb(\Omega^1_Y\otimes \cL)\quad.
\]
However, this is clearly not the case: the fiber of $B\ell_{\Sing(X)}
Y$ over a point $y\not\in X$ equals $s_F(y)$, which is not an element
of the fiber of $\Pbb(\Omega^1_Y\otimes \cL)$ since $F(y) \ne 0$.

This contradiction concludes the proof of Theorem~\ref{thm:SB}.
\end{proof}

As in Remark~\ref{rem:ic}, we can also observe that this argument
proves that for a general hyperplane $H$ there is a regular finite map
\begin{equation}\label{eq:rm}
B\ell_{\underline{\Sing(X)}}Y \to B\ell_{\Sing(\uX)}Y
\end{equation}
(restricting to a finite regular map $B\ell_{\underline{\Sing(X)}\cap
  W}W \to B\ell_{\Sing(\uX)\cap W} W$ for all $W\subseteq Y$).  It
follows that the ideal of $\underline{\Sing(X)}$ is integral over the
ideal of $\Sing(\uX)$ for a general $H$. This (re)proves a particular
case of Teissier's `idealistic Bertini theorem',
\cite[\S2.8]{MR58:27964}. In fact, the idealistic Bertini can conversely be
used to prove Theorem~\ref{thm:SB}.



\begin{thebibliography}{EJP13}

\bibitem[AB03]{MR2020555}
Paolo Aluffi and Jean-Paul Brasselet.
\newblock Interpolation of characteristic classes of singular hypersurfaces.
\newblock {\em Adv. Math.}, 180(2):692--704, 2003.

\bibitem[Alu12]{MR3076849}
Paolo Aluffi.
\newblock Chern classes of graph hypersurfaces and deletion-contraction
  relations.
\newblock {\em Mosc. Math. J.}, 12(4):671--700, 883, 2012.

\bibitem[Alu13]{MR3031565}
Paolo Aluffi.
\newblock Euler characteristics of general linear sections and polynomial
  {C}hern classes.
\newblock {\em Rend. Circ. Mat. Palermo (2)}, 62(1):3--26, 2013.

\bibitem[EE73]{MR0327783}
David Eisenbud and E.~Graham Evans, Jr.
\newblock Every algebraic set in {$n$}-space is the intersection of {$n$}\
  hypersurfaces.
\newblock {\em Invent. Math.}, 19:107--112, 1973.

\bibitem[EJP13]{EJP}
David Eklund, Christine Jost, and Chris Peterson.
\newblock A method to compute {S}egre classes of subschemes of projective
  space.
\newblock {\em J. Algebra Appl.}, 12(2), 2013.

\bibitem[Ful84]{85k:14004}
William Fulton.
\newblock {\em Intersection theory}.
\newblock Springer-Verlag, Berlin, 1984.

\bibitem[Gro67]{MR0238860}
A.~Grothendieck.
\newblock \'{E}l\'ements de g\'eom\'etrie alg\'ebrique. {IV}. \'{E}tude locale
  des sch\'emas et des morphismes de sch\'emas {IV}.
\newblock {\em Inst. Hautes \'Etudes Sci. Publ. Math.}, (32):361, 1967.

\bibitem[GS]{M2}
Daniel~R. Grayson and Michael~E. Stillman.
\newblock Macaulay2, a software system for research in algebraic geometry.
\newblock Available at {http://www.math.uiuc.edu/Macaulay2/}.

\bibitem[Har77]{MR0463157}
Robin Hartshorne.
\newblock {\em Algebraic geometry}.
\newblock Springer-Verlag, New York, 1977.

\bibitem[HS06]{MR2266432}
Craig Huneke and Irena Swanson.
\newblock {\em Integral closure of ideals, rings, and modules}, volume 336 of
  {\em London Mathematical Society Lecture Note Series}.
\newblock Cambridge University Press, Cambridge, 2006.

\bibitem[MQ13]{MR3101819}
Torgunn~Karoline Moe and Nikolay Qviller.
\newblock Segre classes on smooth projective toric varieties.
\newblock {\em Math. Z.}, 275(1-2):529--548, 2013.

\bibitem[Per95]{MR96c:14023}
David Perkinson.
\newblock Curves in {G}rassmannians.
\newblock {\em Trans. Amer. Math. Soc.}, 347(9):3179--3246, 1995.

\bibitem[PP95]{MR1311826}
Adam Parusi{\'n}ski and Piotr Pragacz.
\newblock Chern-{S}chwartz-{M}ac{P}herson classes and the {E}uler
  characteristic of degeneracy loci and special divisors.
\newblock {\em J. Amer. Math. Soc.}, 8(4):793--817, 1995.

\bibitem[Tei77]{MR58:27964}
Bernard Teissier.
\newblock The hunting of invariants in the geometry of discriminants.
\newblock In {\em Real and complex singularities (Proc. Ninth Nordic Summer
  School/NAVF Sympos. Math., Oslo, 1976)}, pages 565--678. Sijthoff and
  Noordhoff, Alphen aan den Rijn, 1977.

\bibitem[Vas05]{MR2153889}
Wolmer Vasconcelos.
\newblock {\em Integral closure}.
\newblock Springer Monographs in Mathematics. Springer-Verlag, Berlin, 2005.

\bibitem[ZS60]{MR0120249}
Oscar Zariski and Pierre Samuel.
\newblock {\em Commutative algebra. {V}ol. {II}}.
\newblock Van Nostrand Co., Inc., Princeton-Toronto-London-New York, 1960.

\end{thebibliography}
\end{document}